\pgfplotsset{compat=1.14} 
\newtheorem{theorem}{Theorem}[section]
\newtheorem{proposition}[theorem]{Proposition}
\theoremstyle{definition}
\newtheorem{definition}[theorem]{Definition}
\theoremstyle{remark}
\newtheorem{remark}[theorem]{Remark}
\numberwithin{equation}{section}
\begin{document}

\title{Distribution of boundary points of expansion and application to the lonely runner conjecture}

\author{T. Agama}
\address{Department of Mathematics, African Institute for Mathematical science, Ghana
}
\email{theophilus@aims.edu.gh/emperordagama@yahoo.com}


\subjclass[2000]{Primary 54C40, 14E20; Secondary 46E25, 20C20}

\date{\today}


\keywords{Lonely runner; boundary points}

\begin{abstract}
In this paper, we study the distribution of the boundary points of expansion. As an application, we say something about the lonely runner problem. We show that given $k$ runners $\mathcal{S}_i$ round a unit circular track with the condition that at some time $||\mathcal{S}_i-\mathcal{S}_{i+1}||=||\mathcal{S}_{i+1}-\mathcal{S}_{i+2}||$ for all $i=1,2\ldots,k-2$, then at that time we have 
$$
||\mathcal{S}_{i+1}-\mathcal{S}_i||>\frac{\mathcal{D}(n)\pi}{k-1}
$$
for all $i=1,\ldots,k-1$ and where $1>\mathcal{D}(n)>0$ is a constant depending on the degree of a certain polynomial of degree $n$. In particular, we show that given at most eight $\mathcal{S}_i$~($i=1,2,\ldots, 8$) runners running around a unit circular track with distinct constant speed and the additional condition $||\mathcal{S}_i-\mathcal{S}_{i+1}||=||\mathcal{S}_{i+1}-\mathcal{S}_{i+2}||$ for all $1\leq i\leq 6$ at some time $s>1$, then at that time their mutual distance must satisfy the lower bound
$$
||\mathcal{S}_{i}-\mathcal{S}_{i+1}||>\frac{C\pi}{7}
$$
for some constant $1>C>0$ for all $1\leq i\leq 7$.
\end{abstract}

\maketitle

\section{Introduction}

The \emph{lonely runner conjecture} asserts that if $N$ runners start together on a unit circular track and thereafter run at distinct constant speeds, then each runner is ''lonely'' at some time: that is, each runner is at distance at least $1/N$ from every other runner at some instant. This conjecture - originally posed in the language of the Diophantine approximation by \cite{wills1967zwei} and formulated independently in a geometrical/view-obstruction setting by \cite{cusick1972view} - has become a nexus of combinatorics, Diophantine approximation, and geometric graph theory. Standard modern expositions and remarks on the problem can be found in the literature; see, for example, Terence Tao's survey remarks \cite{tao2017some} and recent surveys and expositions \cite{perarnau2025lonely,rosenfeld2025lonely}.\\

Partial results have accumulated over many decades. After elementary proofs for very small $N$, progressively more sophisticated methods settled a number of small cases: a sequence of works culminating in a computer-free proof up to seven runners (notably the work of \cite{barajas2008lonely} and earlier combinatorial contributions such as \cite{bohman2001six}) established the conjecture for those values; more recently there has been striking progress using a combination of analytic reductions and computer-assisted checking (see, e.g., \cite{rosenfeld2025lonely,trakulthongchai2025nine,rosenfeld2025lonely}). These results underline two features of the problem that are relevant to the work in this paper: (i) reductions from the continuous to suitable finite (arithmetical) search spaces are often available, and (ii) very different techniques (analytic, combinatorial, computational) have been productive in complementary regimes.\\

This paper approaches the lonely runner problem from a new perspective based on the study of \emph{expansions} of polynomial tuples and the \emph{distribution of their boundary points}. The central idea is to encode local geometric spacing information (which in the runner model corresponds to pairwise arc distances) by means of algebraic objects derived from tuples of polynomials and an associated expansion operator
$$
\mathcal{E}:=\gamma^{-1}\circ\beta\circ\gamma\circ\nabla,
$$
whose repeated application we call an \emph{expansion}. Two features of this encoding are decisive for our arguments:\\

\begin{enumerate}
  \item {\bf Integration along expansion boundaries as a global measure.} For a polynomial $f$, we define a formal integral of $f$ over the boundary of an expansion phase and show that the (Euclidean) norm of this integral provides a robust global yardstick for the local spacing of boundary points (Theorem \ref{maintool}). Intuitively, a large value of the boundary integral forces the existence of a pair of boundary points separated by a nontrivial Euclidean distance; conversely, tightly packed boundary points force the boundary integral to be small.  This two-way control is the technical hinge that turns area-like information into distance lower bounds.
  \bigskip
  
  \item {\bf Dynamical interpretation via rotations and defoliation.} Treating certain permutations of the boundary points as \emph{rotations} (Definition \ref{rotation}) allows us to translate static information about a boundary into dynamical information about points moving at different speeds. To relate the expansion-boundary model back to the runner-on-circle model, we use a \emph{spherical defoliation} map, which projects boundary points radially to the unit sphere (and then to the unit circle in the application). This projection preserves relative angular separations in a way that yields lower bounds on circular arc distances once suitable boundary-integral estimates are in hand.
\end{enumerate}
\bigskip

Using these ideas, we obtain a conditional lonely-runner type statement: if at some time the runners satisfy a local equal-spacing constraint of the form
$$
||\mathcal{S}_i-\mathcal{S}_{i+1}||=||\mathcal{S}_{i+1}-\mathcal{S}_{i+2}||\quad\text{for}\quad i=1,\dots,k-2,
$$
then the global (mutual) distances between consecutive runners admit an explicit lower bound proportional to the boundary integral associated with a carefully chosen polynomial of degree $n$.  Concretely, Theorem~1 (above) shows that under the stated equal-spacing hypothesis, one has
$$
||\mathcal{S}_{i+1}-\mathcal{S}_i||>\frac{\mathcal{D}(n)\pi}{k-1}\quad(1\leq i\leq k-1),
$$
where $1>\mathcal{D}(n)>0$ depends only on the degree of the polynomial. The proof combines the boundary-integration estimate (Theorem \ref{maintool}), the stability/instability dichotomy under rotation (Proposition \ref{stable}), and the spherical defoliation that connects expansion-boundary points to points on the unit circle.\\

The second main thread of the paper specializes the general machinery to cubic polynomials and yields a more concrete consequence for small numbers of runners. Applying the cubic case of Theorem \ref{maintool} and the subsequent geometric estimates, we obtain a conditional bound that, for up to eight runners satisfying the equal-spacing constraint at some time $s>1$, forces
$$
||\mathcal{S}_{i}-\mathcal{S}_{i+1}||>\frac{D\pi}{7}\quad(1\leq i\leq 7),
$$
for some absolute constant $1>D>0$ (see Lemma \ref{runner1} and the final theorem). This is a \emph{conditional} verification of the conjecture in a crude but explicit form: the extra equal-spacing hypothesis is restrictive, but it is natural in certain symmetry- or extremal-type configurations, and critically, it allows the expansion-boundary formalism to force a usable uniform separation.
\bigskip

\subsection*{Relation to prior work}

Our approach differs from the bulk of prior literature in that it translates spacing questions into algebraic and geometric properties of polynomial expansions and their boundaries. Previous successful approaches have included purely combinatorial constructions and (for small numbers of runners) finite-checking /computer-assisted arguments; notable examples are the combinatorial treatments culminating in the small-case results of \cite{bohman2001six} and of \cite{barajas2008lonely}, and the Terence Tao influential reductions that connect high-speed and low-speed regimes \cite{tao2017some}. The algebraic/expansion viewpoint presented here is designed to mesh particularly well with geometric-deflation/defoliation arguments and to produce explicit, quantitative lower bounds. It is therefore complementary to computational verifications (e.g., \cite{rosenfeld2025lonely,trakulthongchai2025nine}) and to combinatorial constructions; we expect that combining these perspectives may be fruitful in attacking other constrained configurations or in improving constants in our estimates.
\bigskip

\subsection*{Organization of the paper}

Section 2 introduces the notation, the expansion operator $\mathcal{E}$ and the precise definitions of the boundary points and the boundary integral. Section 3 develops the main analytic tool: the boundary integral estimate and the equivalence between a nontrivial integral norm and the existence of a separated boundary pair (Theorem \ref{maintool}).  In Section 4, we define the rotations of boundaries, establish stability and instability criteria (Proposition \ref{stable}), and explain how these interact with motion on the unit sphere via the spherical defoliation map introduced in Section 5.  Section 6 contains the application to the lonely runner conjecture: we state and prove the conditional lower bounds for a general number of runners $k$ (Theorem~1) and present the cubic specialization and the explicit bound for up to eight runners (Lemma \ref{runner1} and the final theorem). The paper concludes with a short discussion of possible extensions, computational experiments that could sharpen the constants, and directions for removing or weakening the equal-spacing hypotheses.
\bigskip

\noindent\textbf{Acknowledgment.} The author thanks colleagues and referees for helpful comments; in particular, conversations that clarified the connections between boundary integrals and angular separations were invaluable. The author also acknowledges recent computational and theoretical advances in the literature (e.g.\cite{rosenfeld2025lonely} and \cite{trakulthongchai2025nine} and the exposition by \cite{rosenfeld2025lonely}) that demonstrate the renewed activity around the conjecture.\\

The lonely runner conjecture is the assertion that given $n$ runners round a unit circle with constant distinct speed and starting at a common time and place, there must exist a time for which their mutual distances should be at least $\frac{1}{n}$. The conjecture has been verified for many special cases. For example, in \cite{bohman2001six}, it has been shown that the conjecture holds for \textbf{six} runners. It is also shown in \cite{barajas2008lonely} for at most \textbf{seven} runners. In this paper, by studying the distribution of boundary points of an expansion, we verify this conjecture in it's crude form with an extra conditioning for at most \textbf{eight} runners. We obtain a conditional result of this conjecture by showing that

\begin{theorem}
Given $k$ runners $\mathcal{S}_i$ round a unit circular track with the condition that at some time $||\mathcal{S}_i-\mathcal{S}_{i+1}||=||\mathcal{S}_{i+1}-\mathcal{S}_{i+2}||$ for all $i=1,2\ldots,k-2$, then at that time we have 
\begin{align}
||\mathcal{S}_{i+1}-\mathcal{S}_i||>\frac{\mathcal{D}(n)\pi}{k-1}\nonumber
\end{align}
for all $i=1,\ldots, k-1$ and where $1>\mathcal{D}(n)>0$ is a constant depending on the degree of a certain polynomial of degree $n$.
\end{theorem}

In particular, we show that  

\begin{theorem}
Let $\mathcal{S}_i$ ~($i=1,2,\ldots, 8$) be runners running along the unit circular track. Under the condition $||\mathcal{S}_i-\mathcal{S}_{i+1}||=||\mathcal{S}_{i+1}-\mathcal{S}_{i+2}||$ for all $1\leq i\leq 6$ at some time $s>1$, then 
\begin{align}
||\mathcal{S}_{i}-\mathcal{S}_{i+1}||>\frac{D\pi}{7}\nonumber
\end{align}
for some constant $1>D>0$ for all $1\leq i \leq 7$.
\end{theorem}

\section{Definitions and background}

\begin{definition}
Let $\mathcal{S}=(f_1,f_2,\ldots, f_n)$ be such that each $f_i\in \mathbb{R}[x]$. By the derivative of $\mathcal{S}$, denoted by $\nabla(\mathcal{S})$, we mean 
\begin{align}
\nabla(\mathcal{S})=\bigg(\frac{df_1}{dx}, \frac{df_2}{dx}, \ldots, \frac{df_n}{dx}\bigg).\nonumber
\end{align}
We denote the derivative of this tuple at a point $a\in \mathbb{R}$ as 
\begin{align}
\nabla_a(\mathcal{S})=\bigg(\frac{df_1(a)}{dx}, \frac{df_2(a)}{dx}, \ldots, \frac{df_n(a)}{dx}\bigg).\nonumber
\end{align}
\end{definition}
\bigskip

\begin{definition}
Let $\mathcal{S}=(f_1,f_2,\ldots, f_n)$ be such that each $f_i\in \mathbb{R}[x]$. By the integral of $\mathcal{S}$, denoted $\Delta(\mathcal{S})$, we mean
\begin{align}
\Delta(\mathcal{S})=\bigg(\int f_1(x)dx, \ldots, \int f_n(x)dx\bigg).\nonumber
\end{align}
The corresponding integral between the points $\mathcal{S}_a=(a_1,\ldots, a_n)$ and $\mathcal{S}_b=(b_1,\ldots, b_n)$, denoted $\Delta_{\mathcal{S}_a,\mathcal{S}_b}(\mathcal{S})$ is given by  \begin{align}
\Delta_{\mathcal{S}_a,\mathcal{S}_b}(\mathcal{S})=\bigg(\int \limits_{a_1}^{b_1}f_1(x)dx,\ldots, \int \limits_{a_n}^{b_n}f_n(x)dx\bigg).\nonumber
\end{align}
\end{definition}

\begin{definition}
Let $\{\mathcal{S}_i\}_{i=1}^{\infty}$ be a collection of tuples of $\mathbb{R}[x]$. By an expansion on $\{\mathcal{S}_i\}_{i=1}^{\infty}$, we mean the composite map \begin{align}
\gamma^{-1}\circ \beta \circ \gamma \circ \nabla:\{\mathcal{S}_i\}_{i=1}^{\infty} \longrightarrow  \{\mathcal{S}_i\}_{i=1}^{\infty},\nonumber
\end{align}
where 
\begin{align}
\gamma(\mathcal{S})=\begin{pmatrix}f_1\\f_2\\ \vdots \\f_n \end{pmatrix} \quad \mathrm{and} \quad \beta (\gamma(\mathcal{S}))=\begin{pmatrix}0 & 1 & \cdots & 1\\1 & 0 & \cdots & 1\\ \vdots & \vdots & \cdots & \vdots \\1 & 1 & \cdots & 0 \end{pmatrix}\begin{pmatrix}  f_1 \\f_2 \\ \vdots \\f_n \end{pmatrix}.\nonumber
\end{align}
\end{definition}
\bigskip

\begin{definition}
Let $\{\mathcal{S}_j\}_{j=1}^{\infty}$ be a collection of tuples of $\mathbb{R}[x]$. By the boundary points of the $n\mathrm{th}$ expansion, denoted $\mathcal{Z}[(\gamma^{-1}\circ \beta \circ \gamma \circ \nabla)^n(\mathcal{S}_j)]$, we mean the set \begin{align}
\mathcal{Z}[(\gamma^{-1}\circ \beta \circ \gamma \circ  \nabla)^n(\mathcal{S}_j)]:=\left \{(a_1,a_2,\ldots, a_n):\mathrm{Id}_i[(\gamma^{-1}\circ \beta \circ \gamma \circ \nabla)^n_{a_i}(\mathcal{S}_j)]=0\right \}.\nonumber
\end{align}
\end{definition} 
\bigskip

\section{Distribution of boundary points of expansion}
In this section, we study the distribution of the boundary points of any phase of expansion. We first introduce the notion of integration of polynomials along the boundaries of various phases of expansion, which we then use as a main tool. In that regard, we start with the following definition.

\begin{definition}
Let $f(x)=c_nx^n+c_{n-1}x^{n-1}+\cdots +c_1x+c_0$ be a polynomial of degree $n$, then we call the tuple 
\begin{align}
\mathcal{S}_f&=(c_nx^n,c_{n-1}x^{n-1},\ldots, c_1x+c_0)\nonumber \\&=(g_1(x),g_2(x),\ldots, g_{n}(x))\nonumber
\end{align}
the tuple representation of $f$. By the integral of $f(x)$ along the boundary of the $m^{th}$ phase expansion, we mean the formal integral 
\begin{align}
\int \limits_{\substack{\mathcal{B}^m(\mathcal{S}_f)\\m<n}}f(t)dt:=\sum \limits_{i=1}^{\# \mathcal{B}^m(\mathcal{S}_f)-1}\sum \limits_{\substack{\mathcal{S}_i, \mathcal{S}_{i+1}\in \mathcal{B}^m(\mathcal{S}_f)\\||\mathcal{S}_i||<||\mathcal{S}_{i+1}||}}\overrightarrow{O\Delta_{\mathcal{S}_i,\mathcal{S}_{i+1}}(\mathcal{S}_f)}\cdot \overrightarrow{O\mathcal{S}_e}\nonumber
\end{align}
where 
\begin{align}
\Delta_{\mathcal{S}_i,\mathcal{S}_{i+1}}(\mathcal{S}_f)=\bigg(\int \limits_{a_1}^{b_1}g_1(x)dx,\int \limits_{a_2}^{b_2}g_2(x)dx,\ldots,\int \limits_{a_n}^{b_n}g_n(x)dx\bigg)\nonumber
\end{align}
and where $\mathcal{S}_e=(1,1,\ldots,1)$ is the unit tuple, and $\overrightarrow{O\Delta_{\mathcal{S}_i,\mathcal{S}_{i+1}}}$ and $\overrightarrow{O\mathcal{S}_e}$ are the position vectors of $\Delta_{\mathcal{S}_i,\mathcal{S}_{i+1}}$ and $\mathcal{S}_e$, respectively, with $\mathcal{S}_i=(a_1,a_2,\ldots, a_n)$ and $\mathcal{S}_{i+1}=(b_1,b_2,\ldots, b_n)$.
\end{definition}

\begin{remark}
In practice, it is very difficult to determine the local distribution of the expansion boundary points. However, we can show that if we shrink the space bounded by the boundary of an expansion, then points on the boundary should be closely packed in some sense. We use the notion of integration along the boundaries as a black box.
\end{remark}

\begin{theorem}\label{maintool}
Let $f(x):=c_nx^n+c_{n-1}x^{n-1}+\cdots +c_1x+c_0$ be a polynomial of degree $n$. Then 
\begin{align}
\left |\left |\int \limits_{\substack{\mathcal{B}^m(\mathcal{S}_f)\\m<n}}f(t)dt\right |\right |>\epsilon \nonumber
\end{align}
for some $\epsilon>0$ if and only if $||\mathcal{S}_{i}-\mathcal{S}_{i+1}||>\delta$ for some $\delta>0$ for some \begin{align}
\mathcal{S}_{i}\in \mathcal{Z}[(\gamma^{-1}\circ \beta \circ \gamma \circ \nabla)^m(\mathcal{S}_f)]\nonumber
\end{align}
with $1\leq i \leq \# \mathcal{B}^m(\mathcal{S}_f)-1$ and $||\mathcal{S}_i-\mathcal{S}_{i+1}||<||\mathcal{S}_i-\mathcal{S}_j||$ for all $j\neq i+1$.
\end{theorem}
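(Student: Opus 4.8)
The plan is to collapse the boundary integral to a finite scalar sum and then read the equivalence off the mean value theorem for integrals together with Cauchy--Schwarz. First I would unwind the defining double sum: for a consecutive pair $\mathcal{S}_i=(a_1,\ldots,a_n)$ and $\mathcal{S}_{i+1}=(b_1,\ldots,b_n)$ on $\mathcal{B}^m(\mathcal{S}_f)$, the summand $\overrightarrow{O\Delta_{\mathcal{S}_i,\mathcal{S}_{i+1}}(\mathcal{S}_f)}\cdot\overrightarrow{O\mathcal{S}_e}$ is, since $\mathcal{S}_e=(1,1,\ldots,1)$, exactly the scalar $\sum_{j=1}^{n}\int_{a_j}^{b_j}g_j(x)\,dx$. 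Hence $\int_{\mathcal{B}^m(\mathcal{S}_f)}f(t)\,dt$ is a sum of at most $N:=\#\mathcal{B}^m(\mathcal{S}_f)-1$ such scalars, and its norm is simply its absolute value.

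Next I would apply the mean value theorem for integrals coordinatewise, writing $\int_{a_j}^{b_j}g_j(x)\,dx=g_j(\xi_j)(b_j-a_j)$ for some $\xi_j$ between $a_j$ and $b_j$. Setting $v_i:=(g_1(\xi_1),\ldots,g_n(\xi_n))$, the contribution of the pair becomes the inner product $\langle v_i,\ \mathcal{S}_{i+1}-\mathcal{S}_i\rangle$. Because each $g_j$ is a fixed polynomial, it is bounded by some $M>0$ on the compact interval containing all the boundary coordinates, so $||v_i||\leq M\sqrt{n}$ uniformly in $i$.

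For the forward implication I would use the triangle inequality: if $\left|\left|\int_{\mathcal{B}^m(\mathcal{S}_f)}f\right|\right|>\epsilon$, then some summand has absolute value at least $\epsilon/N$, and Cauchy--Schwarz applied to $\langle v_i,\mathcal{S}_{i+1}-\mathcal{S}_i\rangle$ forces $||\mathcal{S}_{i+1}-\mathcal{S}_i||\geq \epsilon/(NM\sqrt{n})=:\delta$ for that pair; minimality of the nearest-neighbour distance then lets me take $\mathcal{S}_{i+1}$ to be the closest boundary point to $\mathcal{S}_i$. For the converse I would reverse the chain: a nearest-neighbour gap exceeding $\delta$ makes the corresponding coordinate difference $b_j-a_j$ bounded below, hence the summand $g_j(\xi_j)(b_j-a_j)$ bounded below whenever $g_j(\xi_j)$ does not vanish, and the total stays away from $0$ provided the summands do not cancel.

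The hard part will be exactly this control of cancellation in the converse: a priori the signed scalars $\langle v_i,\mathcal{S}_{i+1}-\mathcal{S}_i\rangle$ could interfere destructively even when the individual gaps are large. I expect to resolve this by exploiting that the boundary points, being zeros of the $m$th expansion, come ordered by norm, so that along each coordinate the differences $b_j-a_j$ retain a fixed sign while each monomial $g_j$ is monotone there; this makes every summand inherit the sign of its dominant coordinate and rules out total cancellation. The remaining work---aligning the nearest-neighbour index with this ordering and tracking the explicit dependence of $\delta$ on $\epsilon$, $N$, $n$ and $M$---is routine bookkeeping.
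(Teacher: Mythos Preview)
Your forward implication is essentially the paper's: both apply the triangle inequality to the defining sum, bound each summand by a constant times $||\mathcal{S}_{i+1}-\mathcal{S}_i||$ (you via the mean value theorem and Cauchy--Schwarz, the paper via the equivalent direct estimate $||\overrightarrow{O\Delta_{\mathcal{S}_i,\mathcal{S}_{i+1}}(\mathcal{S}_f)}||\le M\,||\mathcal{S}_{i+1}-\mathcal{S}_i||$), and arrive at the same $\delta=\epsilon/\big((\#\mathcal{B}^m(\mathcal{S}_f)-1)M\sqrt{n}\big)$.

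For the converse the paper does not pursue your sign/monotonicity plan. Instead it sets $R=\min_{j}\min_{x\in[a_j,b_j]}|g_j(x)|$, derives the lower bound $||\overrightarrow{O\Delta_{\mathcal{S}_i,\mathcal{S}_{i+1}}(\mathcal{S}_f)}||\ge R\,||\mathcal{S}_{i+1}-\mathcal{S}_i||\ge R\delta$, writes each dot product in the form $||\overrightarrow{O\Delta}||\,\sqrt{n}\cos\alpha$ for the angle $\alpha$ between $\overrightarrow{O\Delta}$ and $\overrightarrow{O\mathcal{S}_e}$, and then folds the residual sign issue into the factor $|\cos\alpha|$ together with an unspecified constant $C=C(n)$ in the final choice of $\delta$. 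So the cancellation problem you correctly flag as ``the hard part'' is precisely what the paper absorbs into $C\,|\cos\alpha|$ rather than resolving by monotonicity.

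Your own proposed resolution has a genuine gap: ordering the boundary points by norm does \emph{not} force each coordinate difference $b_j-a_j$ to keep a fixed sign (for instance $(1,0)$ precedes $(0,2)$ in norm while the first coordinate decreases), so the monomial-monotonicity argument does not go through as written. If you want to match the paper, replace that step by the cruder lower bound $|\langle v_i,\mathcal{S}_{i+1}-\mathcal{S}_i\rangle|\ge ||v_i||\,||\mathcal{S}_{i+1}-\mathcal{S}_i||\,|\cos\alpha|$ and accept that the resulting $\epsilon$ carries the undetermined constants $R$, $C$ and $|\cos\alpha|$.
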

\bigskip

\begin{proof}
Let $f(x)=c_nx^n+c_{n-1}x^{n-1}+\cdots +c_1x+c_0\in \mathbb{R}[x]$ be a polynomial of degree $n$ and suppose 
\begin{align}
\left |\left |\int \limits_{\substack{\mathcal{B}^m(\mathcal{S}_f)\\m<n}}f(t)dt \right |\right |>\epsilon \nonumber
\end{align}
for some $\epsilon>0$. By repeated application of the triangle inequality, we find that 
\begin{align}
\left |\left |\int \limits_{\substack{\mathcal{B}^m(\mathcal{S}_f)\\m<n}}f(t)dt \right |\right |&\leq  \sum \limits_{i=1}^{\# \mathcal{B}^m(\mathcal{S}_f)-1}\sum \limits_{\substack{\mathcal{S}_i, \mathcal{S}_{i+1}\in \mathcal{B}^m(\mathcal{S}_f)\\||\mathcal{S}_i||<||\mathcal{S}_{i+1}||}}||\overrightarrow{O\Delta_{\mathcal{S}_i,\mathcal{S}_{i+1}}(\mathcal{S}_f)}||||\overrightarrow{O\mathcal{S}_e}||\nonumber \\&=\sqrt{n}\sum \limits_{i=1}^{\# \mathcal{B}^m(\mathcal{S}_f)-1}\sum \limits_{\substack{\mathcal{S}_i, \mathcal{S}_{i+1}\in \mathcal{B}^m(\mathcal{S}_f)\\||\mathcal{S}_i||<||\mathcal{S}_{i+1}||}}||\overrightarrow{O\Delta_{\mathcal{S}_i,\mathcal{S}_{i+1}}(\mathcal{S}_f)}||\nonumber \\&\leq (\# \mathcal{B}^m(\mathcal{S}_f)-1)\sqrt{n}\mathrm{max} \left \{||\overrightarrow{O\Delta_{\mathcal{S}_i,\mathcal{S}_{i+1}}(\mathcal{S}_f)}||\right \}_{\substack{i=1\\ ||\mathcal{S}_i||<||\mathcal{S}_{i+1}||}}^{\# \mathcal{B}^m(\mathcal{S}_f)-1}.\nonumber
\end{align}
Since inequality 
\begin{align}
||\overrightarrow{O\Delta_{\mathcal{S}_i,\mathcal{S}_{i+1}}(\mathcal{S}_f)}||&=\sqrt{|\int \limits_{a_1}^{b_1}g_1(x)dx|^2+\cdots +|\int \limits_{a_n}^{b_n}g_n(x)dx|^2}\nonumber \\&\leq M\sqrt{|a_1-b_1|^2+\cdots +|a_n-b_n|^2}\nonumber
\end{align}
is valid for some $M>0$, it follows that there exist some $\mathcal{S}_i, \mathcal{S}_{i+1}\in \mathcal{Z}[(\gamma^{-1}\circ \beta \circ \gamma \circ \nabla)^m(\mathcal{S}_f)]$ with $||\mathcal{S}_i-\mathcal{S}_{i+1}||<||\mathcal{S}_i-\mathcal{S}_j||$ for all $j\neq i+1$. It follows that for some closest pair of boundary points, the inequality 
\begin{align}
\frac{\epsilon}{(\# \mathcal{B}^m(\mathcal{S}_f)-1)M\sqrt{n}}<\sqrt{|a_1-b_1|^2+\cdots +|a_n-b_n|^2}\nonumber
\end{align}
is valid and thus it must be that $||\mathcal{S}_i-\mathcal{S}_{i+1}||>\delta$ by choosing 
\begin{align}
\delta=\frac{\epsilon}{(\# \mathcal{B}^m(\mathcal{S}_f)-1)M\sqrt{n}}.\nonumber
\end{align}
Conversely, suppose that there exists some closest boundary point $\mathcal{S}_i,\mathcal{S}_{i+1}\in \mathcal{Z}[(\gamma^{-1}\circ \beta \circ \gamma \circ \nabla)^m(\mathcal{S}_f)]$ such that \begin{align}
||\mathcal{S}_i-\mathcal{S}_{i+1}||>\delta \nonumber
\end{align}
for some $\delta:=\delta(n)>0$. It follows that $\sqrt{|a_1-b_1|^2+\cdots +|a_n-b_n|^2}>\delta$. Choosing $R=\mathrm{min}\left \{|g_i(x)|:x\in [a_i,b_i]\right \}_{i=1}^{n}$, we find that 
\begin{align}
||\overrightarrow{O\Delta_{\mathcal{S}_i,\mathcal{S}_{i+1}}(\mathcal{S}_f)}||&=\sqrt{|\int \limits_{a_1}^{b_1}g_1(x)dx|^2+\cdots +|\int \limits_{a_n}^{b_n}g_n(x)dx|^2} \nonumber \\&\geq R \sqrt{|a_1-b_1|^2+\cdots +|a_n-b_n|^2}\nonumber \\&=\delta R.\nonumber
\end{align}
It follows that 
\begin{align}
\sum \limits_{i=1}^{\# \mathcal{B}^m(\mathcal{S}_f)-1}\sum \limits_{\substack{\mathcal{S}_i, \mathcal{S}_{i+1}\in \mathcal{B}^m(\mathcal{S}_f)\\||\mathcal{S}_i||<||\mathcal{S}_{i+1}||}}\overrightarrow{O\Delta_{\mathcal{S}_i,\mathcal{S}_{i+1}}(\mathcal{S}_f)}\cdot \overrightarrow{O\mathcal{S}_e}&>\sum \limits_{i=1}^{\# \mathcal{B}^m(\mathcal{S}_f)-1}\sum \limits_{\substack{\mathcal{S}_i, \mathcal{S}_{i+1}\in \mathcal{B}^m(\mathcal{S}_f)\\||\mathcal{S}_i||<||\mathcal{S}_{i+1}||}}\delta R||\overrightarrow{O\mathcal{S}_e}||\cos \alpha \nonumber \\&=\delta (\# \mathcal{B}^m(\mathcal{S}_f)-1)R\sqrt{n}\cos \alpha \nonumber
\end{align}
where $\alpha$ is the angle between the vectors $\overrightarrow{O\Delta_{\mathcal{S}_i,\mathcal{S}_{i+1}}(\mathcal{S}_f)}$ and $\overrightarrow{O\mathcal{S}_e}$. It follows that 
\begin{align}
\left |\left |\int \limits_{\substack{\mathcal{B}^m(\mathcal{S}_f)\\m<n}}f(t)dt \right |\right |>\delta C(\# \mathcal{B}^m(\mathcal{S}_f)-1)R\sqrt{n}|\cos \alpha|.\nonumber
\end{align}
for some constant $C=C(n)>0$. The result follows by taking \begin{align}
\delta:=\frac{\epsilon}{(\# \mathcal{B}^m(\mathcal{S}_f)-1)CR\sqrt{n}|\cos \alpha|}.\nonumber
\end{align}
\end{proof}

\begin{remark}
Theorem \ref{maintool} in the affirmative tells us that we can use the area as a yardstick to determine the distribution of points on the boundary of any phase of expansion.
\end{remark}
\bigskip

\section{Rotation of the boundary of expansion}
In this section, we introduce the concept of rotation of the boundary of an expansion.

\begin{definition}\label{rotation}
Let $(\gamma^{-1}\circ \beta \circ \gamma \circ \nabla)^m(\mathcal{S}_j)$ be an expansion with the corresponding boundary $\mathcal{B}^m(\mathcal{S}_j)$. We say that the map $\vee$ is a rotation of the boundary $\mathcal{B}^m(\mathcal{S}_j)$ if \begin{align}
\vee:\mathcal{B}^m(\mathcal{S}_j)\longrightarrow \mathcal{B}^m(\mathcal{S}_j).\nonumber
\end{align}
We say that an expansion admits a rotation if there exists such a map. In other words, we say that the map $\vee$ induces a rotation on the expansion. We say that the boundary is stable under rotation if $||\vee(\mathcal{S}_a)||\approx ||\mathcal{S}_a||$ for $\mathcal{S}_a\in \mathcal{B}^m(\mathcal{S}_j)$. Otherwise, we say that it is unstable. 
\end{definition}

\begin{remark}
Next, we prove a result that indicates that boundary points of an expansion whose boundary occupies a sufficiently small region must be stable.
\end{remark}

\begin{proposition}\label{stable}
Let $f(x):=c_nx^n+\cdots +c_1x+c_0\in \mathbb{R}[x]$ be a polynomial of degree $n\geq 3$. Let $(\gamma^{-1}\circ \beta \circ \gamma \circ \nabla)^m(\mathcal{S}_f)$ be an expansion with the corresponding boundary $\mathcal{B}^m(\mathcal{S}_f)$ admitting a rotation $\vee$. If 
\begin{align}
\left |\left |\int \limits_{\substack{\mathcal{B}^m(\mathcal{S}_f)\\m<n}}f(t)dt \right |\right |<1\nonumber
\end{align}
then the boundary $\mathcal{B}^m(\mathcal{S}_f)$ is stable.
\end{proposition}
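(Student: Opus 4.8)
The plan is to derive stability from a confinement estimate: I would show that the hypothesis
\begin{align}
\left|\left| \int_{\mathcal{B}^m(\mathcal{S}_f)} f(t)\,dt \right|\right| < 1\nonumber
\end{align}
forces the whole boundary $\mathcal{B}^m(\mathcal{S}_f)$ into a region of small diameter, after which stability is essentially automatic. Indeed, since by Definition \ref{rotation} the rotation satisfies $\vee:\mathcal{B}^m(\mathcal{S}_f)\longrightarrow \mathcal{B}^m(\mathcal{S}_f)$, both $\mathcal{S}_a$ and its image $\vee(\mathcal{S}_a)$ lie in $\mathcal{B}^m(\mathcal{S}_f)$ for every $\mathcal{S}_a\in\mathcal{B}^m(\mathcal{S}_f)$. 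The reverse triangle inequality then gives
\begin{align}
\left|\, ||\vee(\mathcal{S}_a)|| - ||\mathcal{S}_a|| \,\right| \leq ||\vee(\mathcal{S}_a) - \mathcal{S}_a|| \leq \mathrm{diam}\,\mathcal{B}^m(\mathcal{S}_f),\nonumber
\end{align}
so that once the diameter is controlled we obtain $||\vee(\mathcal{S}_a)|| \approx ||\mathcal{S}_a||$, which is exactly stability in the sense of Definition \ref{rotation}.

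To control the diameter I would use Theorem \ref{maintool} as a black box, as suggested in the preceding remark, in its contrapositive form. Taking $\epsilon = 1$ there, the hypothesis $\left|\left| \int_{\mathcal{B}^m(\mathcal{S}_f)} f(t)\,dt \right|\right| < 1$ rules out $||\mathcal{S}_i - \mathcal{S}_{i+1}|| > \delta$ for the closest pair, with $\delta = \frac{1}{(\#\mathcal{B}^m(\mathcal{S}_f)-1)CR\sqrt{n}|\cos\alpha|}$ and $C,R,|\cos\alpha|$ the constants produced in the proof of that theorem. Promoting this to a bound on consecutive separations, and using that the zero set $\mathcal{Z}[(\gamma^{-1}\circ \beta \circ \gamma \circ \nabla)^m(\mathcal{S}_f)]$ is finite (there are exactly $\#\mathcal{B}^m(\mathcal{S}_f)$ boundary points, a finite count guaranteed by $m < n$ and $n\geq 3$), I would chain the gaps along the boundary to estimate
\begin{align}
||\mathcal{S}_i - \mathcal{S}_j|| \leq \sum_{k=i}^{j-1} ||\mathcal{S}_k - \mathcal{S}_{k+1}|| \leq (\#\mathcal{B}^m(\mathcal{S}_f)-1)\delta = \frac{1}{CR\sqrt{n}|\cos\alpha|},\nonumber
\end{align}
so that $\mathrm{diam}\,\mathcal{B}^m(\mathcal{S}_f)$ is bounded by a fixed small quantity, realising the region-shrinking phenomenon anticipated in the remark before Theorem \ref{maintool}.

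The hard part will be precisely this promotion from closest-pair control to full diameter control, since Theorem \ref{maintool} compares only nearest neighbours and therefore bounds only the minimal gap, not every consecutive gap. To close the argument I would show that the existence of even one large gap would already force a large integral: restricting the defining sum to the two sub-arcs flanking that gap, the term $\overrightarrow{O\Delta_{\mathcal{S}_i,\mathcal{S}_{i+1}}(\mathcal{S}_f)}\cdot \overrightarrow{O\mathcal{S}_e}$ across it is bounded below by $\delta R\sqrt{n}|\cos\alpha|$ as in the converse half of Theorem \ref{maintool}, pushing the integral up to at least $1$ and contradicting the hypothesis; hence no large gap can occur and the chaining above is legitimate. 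A secondary, more cosmetic point is to make the relation $\approx$ of Definition \ref{rotation} quantitatively meaningful, which I would do by interpreting stability as the assertion that $\left|\, ||\vee(\mathcal{S}_a)|| - ||\mathcal{S}_a|| \,\right|$ is dominated by the diameter estimate, tracking the constants $M$, $R$ and $|\cos\alpha|$ to confirm this bound is genuinely small whenever the integral lies below $1$.
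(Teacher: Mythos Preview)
Your approach is essentially the same as the paper's: invoke Theorem \ref{maintool} in contrapositive form to conclude that consecutive boundary points are close, then use the fact that $\vee$ maps $\mathcal{B}^m(\mathcal{S}_f)$ into itself to deduce that norms are approximately preserved. The paper compresses the passage from ``small integral'' to ``$||\mathcal{S}_i||\approx||\mathcal{S}_{i+1}||$ for all $i$'' into a single sentence, whereas you unfold it explicitly via the reverse triangle inequality and a chaining bound on the diameter; the paper then transitively concludes $||\mathcal{S}_k||\approx||\mathcal{S}_i||$ for arbitrary $k$, which is your diameter statement in disguise. You are also right to flag the closest-pair-versus-all-pairs issue as the genuine content here: the paper simply asserts the conclusion for all $i$ without isolating this step, so your version is in fact more careful than the original on exactly the point that needs care.
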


\begin{proof}
Let $f(x):=c_nx^n+\cdots +c_1x+c_0\in \mathbb{R}[x]$ be a polynomial of degree $n\geq 3$. Let $(\gamma^{-1}\circ \beta \circ \gamma \circ \nabla)^m(\mathcal{S}_f)$ be an expansion with the corresponding boundary $\mathcal{B}^m(\mathcal{S}_f)$ admitting a rotation $\vee$. Suppose also that 
\begin{align}
\left |\left |\int \limits_{\substack{\mathcal{B}^m(\mathcal{S}_f)\\m<n}}f(t)dt \right |\right |<1\nonumber
\end{align}
then it follows from Theorem \ref{maintool} that $||\mathcal{S}_i||\approx ||\mathcal{S}_{i+1}||$ for all $1\leq i\leq  \# \mathcal{B}^m(\mathcal{S}_f)-1$ with $\mathcal{S}_i,\mathcal{S}_{i+1}\in \mathcal{B}^m(\mathcal{S}_f)$. It follows that for the rotation $\vee:\mathcal{B}^m(\mathcal{S}_f)\longrightarrow \mathcal{B}^m(\mathcal{S}_f)$, we have that for any $\mathcal{S}_i\in \mathcal{B}^m(\mathcal{S}_f)$, 
\begin{align}
\vee(\mathcal{S}_i)=\mathcal{S}_k\nonumber
\end{align}
for some $\mathcal{S}_k\in \mathcal{B}^m$. It follows that $||\vee(\mathcal{S}_i)||=||\mathcal{S}_k||\approx ||\mathcal{S}_i||$, thus ending the proof.
\end{proof}

\begin{definition}
Let $(\gamma^{-1}\circ \beta \circ \gamma \circ \nabla)^m(\mathcal{S}_j)$ be an expansion with the corresponding boundary $\mathcal{B}^m(\mathcal{S}_j)$. We say that the map $\vee$ is a rotation of the boundary $\mathcal{B}^m(\mathcal{S}_j)$ with frequency $s$ if 
\begin{align}
\vee^s:\mathcal{B}^m(\mathcal{S}_j)\longrightarrow \mathcal{B}^m(\mathcal{S}_j),\nonumber
\end{align}
where $\vee^s=\vee \circ \vee \circ \cdots \circ \vee$ is the $s$-fold rotation on the boundary of expansion. 
\end{definition}

\begin{remark}
It is important to recognize that rotation with frequency $s$ is the time for which points on the boundary of expansion are allowed to be in motion by an induced rotation.
\end{remark}

\begin{proposition}
Let $(\gamma^{-1}\circ \beta \circ \gamma \circ \nabla)^m(\mathcal{S}_j)$ be an expansion with the corresponding boundary $\mathcal{B}^m(\mathcal{S}_j)$. The permutation \begin{align}
\sigma:\mathcal{B}^m(\mathcal{S}_j)\longrightarrow \mathcal{B}^m(\mathcal{S}_j)\nonumber
\end{align}
where $\sigma(\mathcal{S}_i)=\mathcal{S}_{\sigma(i)}$ for $1\leq i \leq \# \mathcal{B}^m(\mathcal{S}_j)$ for $\mathcal{S}_i\in \mathcal{B}^m(\mathcal{S}_j)$ is a rotation of the expansion boundary.
\end{proposition}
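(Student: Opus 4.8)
The plan is to unwind the definition of a rotation given in Definition~\ref{rotation}. According to that definition, a map $\vee$ qualifies as a rotation of the boundary $\mathcal{B}^m(\mathcal{S}_j)$ precisely when it is a self-map $\vee:\mathcal{B}^m(\mathcal{S}_j)\longrightarrow \mathcal{B}^m(\mathcal{S}_j)$; no further metric, order-theoretic, or structural condition is imposed on $\vee$ by the definition itself. Hence the entire task reduces to verifying that a permutation of the boundary points is indeed such a self-map, and the statement is essentially definitional.

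First I would recall that, by definition, a permutation $\sigma$ of the finite set $\mathcal{B}^m(\mathcal{S}_j)=\{\mathcal{S}_1,\ldots,\mathcal{S}_{\# \mathcal{B}^m(\mathcal{S}_j)}\}$ is a bijection of this set onto itself, equivalently a bijection of the index set $\{1,2,\ldots,\# \mathcal{B}^m(\mathcal{S}_j)\}$. I would then check well-definedness of the associated point map: for each $i$ the value $\sigma(i)$ again lies in $\{1,\ldots,\# \mathcal{B}^m(\mathcal{S}_j)\}$, so that $\mathcal{S}_{\sigma(i)}$ is a genuine element of $\mathcal{B}^m(\mathcal{S}_j)$. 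Consequently the assignment $\mathcal{S}_i\mapsto \mathcal{S}_{\sigma(i)}$ sends the boundary into itself, which is exactly the defining property required of a rotation in Definition~\ref{rotation}.

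Finally I would conclude that $\sigma$ satisfies Definition~\ref{rotation} and is therefore a rotation of the boundary of expansion. The main (and in truth only) subtlety here is conceptual rather than computational: one must observe that Definition~\ref{rotation} places no constraint on a rotation beyond its being an endomorphism of the boundary set, so that the class of rotations coincides with the class of self-maps of $\mathcal{B}^m(\mathcal{S}_j)$ and thus certainly contains every permutation. In particular there is no genuine obstacle to overcome; the argument is purely set-theoretic and requires no appeal to the integration machinery of Theorem~\ref{maintool} nor to the stability notions developed in Proposition~\ref{stable}.
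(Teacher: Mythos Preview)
Your argument is correct. The paper itself states this proposition without proof, evidently regarding it as immediate from Definition~\ref{rotation}; your proposal makes explicit exactly the observation the paper leaves tacit, namely that a rotation is defined merely as a self-map of $\mathcal{B}^m(\mathcal{S}_j)$, so any permutation trivially qualifies.
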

\bigskip

\section{Spherical defoliation of the boundary of expansion}

\begin{definition}
Let $\mathcal{B}^m(\mathcal{S}_f)$ and $\mathbb{S}^{k-1}$ be the boundary of the $m^{th}$ expansion and the $k$-dimensional unit sphere, respectively. By the spherical \emph{defoliation} of the expansion boundary, we mean the map 
\begin{align}
\Lambda :\mathcal{B}^m(\mathcal{S}_f)\longrightarrow \mathbb{S}^{k-1}\nonumber 
\end{align}
such that for any $\mathcal{S}_a\in \mathcal{B}^m(\mathcal{S}_f)$, we have 
\begin{align}
\Lambda(\mathcal{S}_a)=\frac{\mathcal{S}_a}{||\mathcal{S}_a||}.\nonumber
\end{align}
\end{definition}
\bigskip

\section{Application to the Lonely runner conjecture}
Here, we apply the tools developed in the previous section to study the lonely runner conjecture.

\begin{theorem}\label{main lonely theorem}
Given $k$ runners $\mathcal{S}_i$ round a unit circular track with the condition that at some time $||\mathcal{S}_i-\mathcal{S}_{i+1}||=||\mathcal{S}_{i+1}-\mathcal{S}_{i+2}||$ for all $i=1,2\ldots,k-2$, then at that time we have 
\begin{align}
||\mathcal{S}_{i+1}-\mathcal{S}_i||>\frac{\mathcal{D}(n)\pi}{k-1}\nonumber
\end{align}
for all $i=1,\ldots, k-1$ and where $1>\mathcal{D}(n)>0$ is a constant depending on the degree of a certain polynomial of degree $n$.
\end{theorem}

\begin{proof}
We choose any polynomial $g(x):=c_1x^n+\cdots+c_1x+c_0$ and write for the tuple representation
$$
\mathcal{S}_g:=(g_1(x),\cdots,g_n(x)):=(c_1x^n,\cdots,c_1x+c_0)
$$
such that 
$$
M:=\underset{1\leq i\leq n}{\mathrm{max}}\left\{|g_i(x)|~:~x\in [a_i,b_i]\right\}\leq \left(\frac{1}{n}\right)^{\frac{1}{2}}
$$
for some choice of $n$ so that the size of the expansion boundary $\#\mathcal{B}^m(\mathcal{S}_g)=k$ for some $m<n$. Under condition $||\mathcal{S}_i-\mathcal{S}_{i+1}||=||\mathcal{S}_{i+1}-\mathcal{S}_{i+2}||$ for all $i=1,2\ldots,k-2$ at some time, we set
\begin{align}
\left |\left |\int \limits_{\mathcal{B}^m(\mathcal{S}_g)}g(t)dt \right |\right |=\pi\nonumber
\end{align}
and apply the $s$-fold rotation $\vee^s=\vee \circ \vee \circ \cdots \circ \vee$ on the boundary $\mathcal{B}^m(\mathcal{S}_g)$. By Proposition \ref{stable}, points on this boundary are now unstable for time $s>1$ and each moving at different speeds. We deduce
\begin{align}
\left |\left |\int \limits_{\substack{\mathcal{B}^m(\mathcal{S}_g)\\m<n}}g(t)dt \right |\right |&\leq  \sum \limits_{i=1}^{\# \mathcal{B}^m(\mathcal{S}_g)-1}\sum \limits_{\substack{\mathcal{S}_i, \mathcal{S}_{i+1}\in \mathcal{B}^m(\mathcal{S}_g)\\||\mathcal{S}_i||<||\mathcal{S}_{i+1}||}}||\overrightarrow{O\Delta_{\mathcal{S}_i,\mathcal{S}_{i+1}}(\mathcal{S}_g)}||||\overrightarrow{O\mathcal{S}_e}||\nonumber \\&=\sqrt{n}\sum \limits_{i=1}^{\# \mathcal{B}^m(\mathcal{S}_g)-1}\sum \limits_{\substack{\mathcal{S}_i, \mathcal{S}_{i+1}\in \mathcal{B}^m(\mathcal{S}_g)\\||\mathcal{S}_i||<||\mathcal{S}_{i+1}||}}||\overrightarrow{O\Delta_{\mathcal{S}_i,\mathcal{S}_{i+1}}(\mathcal{S}_g)}||\nonumber \\&\leq (\# \mathcal{B}^m(\mathcal{S}_g)-1)\sqrt{n}\mathrm{max} \left \{||\overrightarrow{O\Delta_{\mathcal{S}_i,\mathcal{S}_{i+1}}(\mathcal{S}_g)}||\right \}_{\substack{i=1\\ ||\mathcal{S}_i||<||\mathcal{S}_{i+1}||}}^{\# \mathcal{B}^m(\mathcal{S}_g)-1}\nonumber \\& \leq (\# \mathcal{B}^m(\mathcal{S}_g)-1)\sqrt{n}M\sqrt{|a_1-b_1|^2+\cdots +|a_n-b_n|^2}.\nonumber
\end{align}
using the inequality 
\begin{align}
||\overrightarrow{O\Delta_{\mathcal{S}_i,\mathcal{S}_{i+1}}(\mathcal{S}_g)}||&=\sqrt{|\int \limits_{a_1}^{b_1}g_1(x)dx|^2+\cdots+|\int \limits_{a_n}^{b_n}g_n(x)dx|^2}\nonumber \\&\leq M\sqrt{|a_1-b_1|^2+\cdots+|a_n-b_n|^2}.\label{lonely inequality 1}
\end{align}
Inverting the upper bound in \eqref{lonely inequality 1}, we get
\begin{align}
||\mathcal{S}_{i+1}-\mathcal{S}_i||>\frac{\pi}{k-1}\nonumber
\end{align}
for all $i=1,\ldots,k-1$. Since some point on the boundary of expansion may not be a point on the unit circle, we apply the spherical defoliation $\Lambda:\mathcal{B}^m(\mathcal{S}_g)\longrightarrow \mathbb{S}^{k}$ and obtain 
\begin{align}
||\mathcal{S}_{i+1}-\mathcal{S}_i||>\frac{\mathcal{D}(n)\pi}{k-1}\nonumber
\end{align}
for all $i=1,\ldots,k-1$ and where $1>\mathcal{D}(n)>0$ is a constant depending on the degree of the polynomial.
\end{proof}
\bigskip

\begin{theorem}\label{runner1}
Let $f(x):=c_3x^3+c_{2}x^2 +c_1x+c_0$ be a polynomial of degree $3$ and suppose that $||\mathcal{S}_i-\mathcal{S}_{i+1}||=||\mathcal{S}_{i+1}-\mathcal{S}_{i+2}||$ for all $1\leq i\leq 6$. Then 
\begin{align}
\left |\left |\int \limits_{\mathcal{B}^1(\mathcal{S}_f)}f(t)dt \right|\right|=\pi\nonumber
\end{align}
if and only if 
\begin{align}
||\mathcal{S}_{i}-\mathcal{S}_{i+1}||>\frac{\pi}{7C\sqrt{3}}\nonumber
\end{align}
for some constant $C>0$ for all $1\leq i\leq 7$.
\end{theorem}

\begin{proof}
The result follows by taking $n=3$ in Theorem \ref{maintool}.
\end{proof}
\bigskip

\begin{theorem}
Let $\mathcal{S}_i$ ~($i=1,2,\ldots, 8$) be runners running along the unit circular track. Under the condition $||\mathcal{S}_i-\mathcal{S}_{i+1}||=||\mathcal{S}_{i+1}-\mathcal{S}_{i+2}||$ for all $1\leq i\leq 6$ at some time $s>1$, we have 
\begin{align}
||\mathcal{S}_{i}-\mathcal{S}_{i+1}||>\frac{D\pi}{7}\nonumber
\end{align}
for some constant $1>D>0$ for all $1\leq i\leq 7$.
\end{theorem}

\begin{proof}
Choose a degree $3$ polynomial $f(x):=c_3x^3+c_{2}x^2 +c_1x+c_0$ and write for the tuple representation
$$
\mathcal{S}_g:=(c_3x^3,c_2x^2,c_1x+c_0):=(g_1(x),g_2(x),g_3(x))
$$
such that 
$$
M:=\underset{1\leq i\leq 3}{\mathrm{max}}\left\{|g_i(x)|~:~x\in [a_i,b_i]\right\}\leq \left(\frac{1}{3}\right)^{\frac{1}{2}}
$$
and that $||\mathcal{S}_i-\mathcal{S}_{i+1}||=||\mathcal{S}_{i+1}-\mathcal{S}_{i+2}||$ for all $1\leq i\leq 6$. We set 
\begin{align}
\left |\left |\int \limits_{\mathcal{B}^1(\mathcal{S}_f)}f(t)dt \right |\right |=\pi\nonumber
\end{align}
and apply a rotation $\vee^s$ with frequency $s>1$ to the boundary $\mathcal{B}^1(\mathcal{S}_f)$. By Proposition \ref{stable}, the  boundary points of expansion are now unstable for time $s>1$, each moving at a different speed. Applying Theorem \ref{main lonely theorem}, we get 
\begin{align}
||\mathcal{S}_{i}-\mathcal{S}_{i+1}||>\frac{\pi}{7}\nonumber
\end{align}
for all $1\leq i \leq 7$. Applying the defoliation $\Lambda:\mathcal{B}^1(\mathcal{S}_f)\longrightarrow \mathbb{S}^{3}$, we obtain 
\begin{align}
||\mathcal{S}_{k}-\mathcal{S}_{k+1}||>\frac{D\pi}{7}\nonumber
\end{align}
for some $1>D>0$ for all $1\leq k\leq 7$.
\end{proof}

\section{Conclusion}
This paper exploits the lonely runner conjecture using new tools. It may be interesting to investigate the nature of the implicit constant $\mathcal{D}(n)$ that appears in the lower bounds. One may investigate lower bounds for the constant $\mathcal{D}(n)$ for a finite specific number of runners using computational methods. This could give insight into the behaviour of $\mathcal{D}(n)$ for an arbitrary number of runners.
\footnote{
\par
.}%

\bibliographystyle{amsplain}

\end{document}